\newtheorem{theorem}{Theorem}[section]
\newtheorem{cor}[theorem]{Corollary} 
\newtheorem{definition}[theorem]{Definition}
\def\bbn{\mathbb{N}}
\def\bbr{\mathbb{R}} 
\def\w{\omega}
\def\ca{\mathcal{A}}
\def\cb{\mathcal{B}}
\def\cd{\mathcal{D}}
\def\ci{\mathcal{I}}
\def\cm{\mathcal{M}}
\def\cn{\mathcal{N}}
\def\cp{\mathcal{P}}
\def\b{\mathfrak{b}}
\def\c{\mathfrak{c}}
\def\d{\mathfrak{d}}
\DeclareMathOperator{\Perf}{Perf}
\DeclareMathOperator{\Borel}{Borel}
\DeclareMathOperator{\Laver}{LaverTrees}
\DeclareMathOperator{\Miller}{MillerTrees}
\DeclareMathOperator{\ZFC}{ZFC}
\DeclareMathOperator{\add}{add}
\DeclareMathOperator{\non}{non}
\DeclareMathOperator{\cov}{cov}
\DeclareMathOperator{\cof}{cof}
\def\ad{{ a.d.\,}}
\def\mad{{ m.a.d.\,}}
\def\ed{{ e.d.\,}}
\def\restricted{\upharpoonright}
\def\then{\longrightarrow}
\def\then{\longrightarrow}
\def\<{\langle}
\def\>{\rangle}
\def\Repicky{{Repick{\'y} }}
\title[Nonmeasurable sets and unions]{Nonmeasurable sets and unions with respect to selected ideals especially ideals defined by trees}
\author{Robert Rałowski and Szymon Żeberski}
\address{ Department of Computer Science, Faculty of Fundamental Problems of Technology, Wroc\l aw University of Technology, Wybrzeże Wyspiańskiego 27, 50-370 Wroc\l aw, Poland}
\thanks{\hspace*{-0.77cm} AMS Classification: Primary 03E17, 03E50, 03E75; Secondary 28A99\\
Keywords: Marczewski ideal, Laver tree, Miller tree, m.a.d. family, dominating family, nonmeasurable set, completely nonmeasurable set, Bernstein set, Polish space, Continuum Hypothesis.\\
The work has been partially financed by grant S40012/K1102 from the Faculty of Fundamental Problems of Technology of Wrocław University of Technology.}
\email[Robert Rałowski]{robert.ralowski@pwr.edu.pl}
\email[Szymon Żeberski]{szymon.zeberski@pwr.edu.pl}
\begin{document}

\begin{abstract} In this paper we consider nonmeasurablity with respect to $\sigma$-ideals defined be trees. First classical example of such ideal is Marczewski ideal $s_0.$ We will consider also ideal $l_0$ defined by Laver trees and $m_0$ defined by Miller trees. With the mentioned ideals one can consider $s$, $l$ and $m$-measurablility. 

We have shown that there exists a subset $A$ of the Baire space which is $s$, $l$ and $m$ nonmeasurable at the same time. Moreover, $A$ forms m.a.d. family which is also dominating. We show some examples of subsets of the Baire space which are measurable in one sense and nonmeasurable in the other meaning.

We also examine terms nonmeasurable and completely nonmeasurable (with respect to several ideals with Borel base). There are several papers about finding (completely) nonmeasurable sets which are the union of some family of small sets. In this paper we want to focus on the following problem:
"Let $\cp$ be a family of small sets. Is it possible that
for all $\ca\subseteq\cp$,
$\bigcup\ca$  is nonmeasurable  implies that
$\bigcup\ca$ is completely nonmeasurable?"

We will consider situations when $\cp$ is a partition of $\bbr$, $\cp$ is point-finite family and $\cp$ is point-countable family. We give an equivalent statement to CH using terms nonmeasurable and completely nonmeasurable.

\end{abstract}

\maketitle

\section{Notation}
We will use standard set-theoretic notation following e.g. \cite{Jech}.  
$\bbr$ will denote the real line. For a set $X$, $P(X)$ denotes the power set of $X$ and $|X|$ denotes the cardinality of $X$. If $\kappa$ is a cardinal number then
$$\begin{array}{r@{\; =\; }l}
  [X]^\kappa & \{A\subseteq X:\ |A|=\kappa\}\\  
  {[X]^{<\kappa}} & \{A\subseteq X:\ |A|<\kappa\}\\  
  {[X]^{\le\kappa}} & \{A\subseteq X:\ |A|\le\kappa\}\\
  \end{array}
$$

Let $X$ be any uncountable Polish space with $\ci$ an arbitrary $\sigma$-ideal on $\cp(X)$ and let us recall the cardinal coefficients of $\ci$
\begin{itemize}
\item $\non(\ci)=\min \{ |F|:\; F\subseteq X\land F\notin \ci\},$
\item $\add(\ci)=\min \{ |\ca|:\; \ca\subseteq \ci\land \bigcup\ca\notin \ci \},$
\item $\cof(\ci)=\min \{ |\cb|:\; \cb\subseteq \ci\land (\forall A\in \ci)(\exists B\in\cb) A\subseteq B\},$
\item $\cov(\ci)=\min \{ |\ca|:\; \ca\subseteq \ci\land \bigcup\ca = X\},$
\item for a fixed family of perfect subsets $\cp \subseteq Perf(X)$ let\\ $\cov_h(\ci)=\min \{ |\ca|:\; \ca\subseteq \ci\land (\exists P\in \cp)\; P\subseteq \bigcup\ca\}$.
\end{itemize}
Let us recall the definition of a bounding number.
$$ 
 \b = \min \{| \cb |:\ \cb \subseteq \w^\w \land (\forall x \in \w^\w) (\exists y \in \cb) \; \neg (s \le^* x) \} 
 $$ 

In \cite{Marczewski} Marczewski introduced the notion of $s$-measurability and the $s_0$-ideal. Recalling these definitions we have:
\begin{definition}[Marczewski ideal $s_0$] Let $X$ be any fixed uncountable Polish space. Then we say that $A\in \cp(X)$ is in $s_0$ iff
$$
(\forall P\in Perf(X))(\exists Q\in Perf(X))\; Q\subseteq P\land Q\cap A=\emptyset.
$$
\end{definition}
Notice that for this ideal we have $\cov(s_0)=\cov_h(s_0)$ and this cardinal is the same for all uncountable Polish spaces. 
To see this use the fact that in any uncountable Polish space there is a disjoint maximal antichain $\ca$ (of cardinality $\c$) consisting of Cantor perfect sets. 
From this it follows that $B\in s_0$ if and only if $(\forall A\in\ca)\; B\cap A\in s_0$.

\begin{definition}[$s$-measurable set] Let $X$ be any fixed uncountable Polish space. Then we say that $A\in \cp(X)$ is {\bf $s$-measurable} iff
$$
(\forall P\in \Perf(X))(\exists Q\in \Perf(X))\; Q\subseteq P\land (Q\subseteq A\lor Q\cap A=\emptyset).
$$
Moreover, a set $A\in \cp(X)$ is a {\bf Bernstein set} if
$$
(\forall P\in Perf(X))\; P\cap A\ne\emptyset\land P\cap A^c\ne \emptyset,
$$
(where $A^c$ denotes complement of the set A in space $X$).
\end{definition}

\begin{definition} Let $X$ be any uncountable Polish space and let us consider a cardinal $\kappa$. We say that the family 
$\ca\subseteq P(X)$ is {\bf $\kappa$-point family} iff $|\{ A\in\ca:\ x\in A\}|<\kappa$ for all $x\in X$. 
\end{definition}
We say that $\ca$ is point-finite family if $\ca$ is $\omega$-point family and  $\ca$ is countable-point family if $\ca$ is $\omega_1$-point family.

We say that $\sigma$-ideal $\ci$ of subsets of some Polish space $X$ has Borel base if for any set $A\in \ci$ there is a Borel set $B\in Bor(X)\cap\ci$ such that $A\subseteq B$. 
Classical examples of ideals possesing Borel base on the real line are 
\begin{itemize}
 \item the $\sigma$-ideal $\ci=[\mathbb{R}]^{\le\omega}$ of all countable subsets,
 \item the $\sigma$-ideal $\cm$ of meager subsets, 
 \item the $\sigma$-ideal $\cn$ of null subsets with respect to Lebesgue measure.
\end{itemize}

For fixed $\sigma$-ideal $\ci$ with Borel base we say that a subset $A\subseteq X$ of Polish space $X$ is measurable with respect to $\ci$ iff $A$ belongs to $\sigma$-algebra $Bor[\ci]$ generated by Borel subsets of $X$ and $\sigma$-ideal $\ci$. 

In the first part of this paper we consider subsets connected to $\sigma$-ideal without Borel base generated by trees. We are interested in measurability connected to Laver trees and Miller trees and it's interplay with m.a.d. families.

In the second part we investigate subsets connected to $\sigma$-ideals with Borel base.
We discuss the difference between measurability and complete nonmeasurability of unions of small sets.

\section{m.a.d. families and their $s$, $l$ and $m$-measurability}

For every tree $T\subseteq \omega^{<\omega}$ let $[T]$ be the set of all branches of $T$ which is defined as follows:
$$
[T]=\{ x\in \omega^\omega:\; (\forall n\in\omega)\; x\restricted n\in T\}.
$$

We say that a tree $T\subseteq \omega^{<\omega}$ is called a {\bf Laver tree} iff there is a node $s\in T$ such that, 
for every node $t\in T$ if $s\subseteq t$ then $t$ is infinitely spliting i.e. $\{ n\in\omega:\; s^\frown n\in T\}$ is infinite.

The set of all Laver trees is denoted by the $\Laver$. Moreover, recalling th definition of the ideal $l_0$, we have
\begin{definition}[ideal $l_0$] We say that $A\in \cp(\omega^\omega)$ is in $l_0$ iff
$$
(\forall T\in \Laver)(\exists Q\in \Laver)\; Q\subseteq T\land [Q]\cap A=\emptyset.
$$
\end{definition}
\begin{definition}[$l$-measurable set] We say that $A\in \cp(\omega^\omega)$ is {\bf $l$-measurable} iff for every Laver tree $T\in \Laver$ there is a Laver tree $S\in\Laver$ such that
$$
(S\subseteq T\land [S]\subseteq A)\lor (S\subseteq T\land [S]\cap A=\emptyset).
$$
\end{definition}

We say that a tree $T\subseteq \omega^{<\omega}$ is called a {\bf Miller tree} iff there is a node $s\in T$ such that, 
for every node $t\in T$ if $s\subseteq t$ then there is $t'$ such that $t\subseteq t'$ and $t'$ is infinitely spliting. 

The set of all Miller trees is denoted by the $\Miller$. Moreover, recalling th definition of the ideal $m_0$, we have
\begin{definition}[ideal $m_0$] We say that $A\in \cp(\omega^\omega)$ is in $m_0$ iff
$$
(\forall T\in \Miller)(\exists Q\in \Miller)\; Q\subseteq T\land [Q]\cap A=\emptyset.
$$
\end{definition}
\begin{definition}[$m$-measurable set] We say that $A\in \cp(\omega^\omega)$ is {\bf $m$-measurable} iff for every Miller tree $T\in \Miller$ there is a Miller tree $S\in\Miller$ such that
$$
(S\subseteq T\land [S]\subseteq A)\lor (S\subseteq T\land [S]\cap A=\emptyset).
$$
\end{definition}

It is well known by Judah, Miller, Shelah see \cite{JMS} and \Repicky see \cite{Rep} that $add(s_0)\le cov(s_0)\le cof(\c)\le non(s_0)=\c<cof(s_0)\le 2^\c$.
Cleary $\omega_1\le \add(l_0)\le \cov(l_0)\le \c$ holds. 
Moreover, in \cite{Goldstern}, Goldstern, \Repicky\!\!, Shelah and Spinas showed that it is relatively consistent with $\ZFC$ that $\add(l_0)<\cov(l_0)$.


Using the natural isomorphism of the perfect set $P=[T]$, where $T$ is Laver tree, with $\omega^\omega$, we have $cov_h(l_0)=cov(l_0)$.


Let us recall the definition of almost disjoint family. Any family of sets $\ca\subseteq [\omega]^\omega$ is an {\bf \ad--family} on $\omega$ if
$$
(\forall a,b\in\ca)\; a\ne b\then a\cap b\in [\omega]^{<\omega}.
$$
Two reals $f,g\in\omega^\omega$ in Baire space are {\bf eventually different \ed} iff $f\cap g$ is a finite subset of $\omega\times\omega$. 
Let us observe that an \ed family $\ca\subseteq\omega^\omega$ is an \ad family on $\omega\times\omega$. 
For this reason we will identify the notions of eventually different family and almost disjoint \ad family. 
Maximal almost disjoint (or eventually different) families with respect to inclusion are called {\bf \mad families}.

Every \ad family is a meager subset of the Cantor space and every $\ed$ family is a meager subset of the Baire space. 
It is natural to ask whether the existence \mad families that are either $s$-measurable or $s$-nonmeasurable can be proven in $\ZFC$ alone. 
One can find a consistent example of a \mad family $\ca$ of cardinality smaller than $\c$ (see \cite{Kunen}, for example). 
In a this case $\ca\in s_0$ since we have $\non(s_0)=\c$. Furthermore it is well known that there exists a perfect \ad family and therefore not all \mad families are in $s_0$.

\begin{theorem}\label{nonmeasurable_mad} There exists a $s$-nonmeasurable  m.a.d. family in Baire space.
\end{theorem}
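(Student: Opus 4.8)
The plan is to produce the family as a maximal \ed{} family $\ca\subseteq\w^\w$ that is a Bernstein set \emph{relative to a fixed perfect \ed{} set}, and to read off $s$-nonmeasurability from that. First I fix a perfect set $P_0\subseteq\w^\w$ all of whose distinct branches are \ed{}; then every subset of $P_0$ is automatically an \ed{} family, so inside $P_0$ eventual difference comes for free. The point of the reduction is this: if $\ca\cap P_0$ meets every perfect $Q\subseteq P_0$ while $P_0\setminus\ca$ also meets every such $Q$, then for the single witnessing set $P_0$ no perfect $Q\subseteq P_0$ can satisfy $Q\subseteq\ca$ (some point of $Q$ lies in the complement) nor $Q\cap\ca=\emptyset$ (some point of $Q$ lies in $\ca$); hence $\ca$ is $s$-nonmeasurable.

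The concrete choice of $P_0$ carries the combinatorial engine. Fix injections $c,d\colon 2^{<\w}\to\w$ with disjoint ranges and set $f_x(n)=c(x\restricted n)$ for all $n$, and $e_x(n)=c(x\restricted n)$ for even $n$, $e_x(n)=d(x\restricted n)$ for odd $n$. Then $P_0=\{f_x:x\in 2^\w\}$ is perfect and pairwise \ed{}, while $e_x$ agrees with $f_x$ on all even coordinates (so it \emph{blocks} $f_x$) yet is \ed{} from every $f_{x'}$ and every $e_{x'}$ with $x'\ne x$. The decisive estimate is a ``unique agreement'' property: for any single $g\in\w^\w$ there is at most one $x$ with $f_x\cap g$ infinite, and at most two $x$ with $e_x\cap g$ infinite, since infinitely many agreement coordinates already determine $x$ by injectivity of $c$ (resp. $d$). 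Consequently, for any $H\subseteq\w^\w$ with $|H|<\c$ and any perfect $Q\subseteq P_0$, all but fewer than $\c$ branches of $Q$ are \ed{} from every member of $H$, and symmetrically for the blockers.

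I then run a recursion of length $\c$, fixing enumerations $\{Q_\alpha:\alpha<\c\}$ of the perfect subsets of $P_0$ and $\{h_\alpha:\alpha<\c\}$ of $\w^\w$, and maintaining an \ed{} family $\ca_\alpha$ of size $<\c$ together with a reserved set $R_\alpha\subseteq P_0\setminus\ca_\alpha$. At stage $\alpha$, always choosing \emph{fresh} indices, I do three things: (i) by the estimate above pick $p_\alpha=f_{x_\alpha}\in Q_\alpha$ that is \ed{} from all of $\ca_\alpha$ and put it into $\ca$, so $\ca$ meets $Q_\alpha$; (ii) pick $r_\alpha=f_{y_\alpha}\in Q_\alpha$ whose blocker $e_{y_\alpha}$ is \ed{} from all of $\ca_\alpha$ (again only $<\c$ indices are forbidden), reserve $r_\alpha$ into $R$ and put $e_{y_\alpha}$ into $\ca$; since $e_{y_\alpha}$ agrees infinitely with $r_\alpha$, the point $r_\alpha$ can never later be added, so $P_0\setminus\ca$ meets $Q_\alpha$; (iii) if $h_\alpha$ is \ed{} from all current members, add $h_\alpha$, otherwise it is already blocked. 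The union $\ca=\bigcup_{\alpha<\c}\ca_\alpha$ is \ed{} (any two members are compared at the later stage where the second enters), maximal (every $h_\alpha$ ends blocked), and by (i),(ii) Bernstein relative to $P_0$, hence $s$-nonmeasurable and \mad.

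I expect the main obstacle to be exactly the tension between maximality and the remaining requirements. Maximality forces continuum many ``unstructured'' functions $h_\alpha$ outside the tidy family $\{f_x\}\cup\{e_x\}$ — indeed the unique-agreement property shows $P_0$ by itself blocks almost nothing, so $\{f_x\}\cup\{e_x\}$ is very far from maximal. The danger is that each later choice in (i) and (ii) must stay \ed{} from all these previously added arbitrary functions, and that the reserved points $r_\alpha$ must survive the maximality step of every subsequent stage. Both are resolved by the same device: the unique-agreement estimate guarantees that at each stage only fewer than $\c$ branches (resp. blockers) are spoiled by the $<\c$ functions already present, so good fresh choices always remain; and reserving $r_\alpha$ \emph{together with} its blocker $e_{y_\alpha}$ at the moment of reservation protects $r_\alpha$ permanently, since step (iii) never adds a function that already fails to be \ed{} from a member of $\ca$. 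Verifying these cardinality bounds and the permanence of the reservation is the technical heart; the rest is routine bookkeeping.
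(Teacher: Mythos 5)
Your overall strategy is the same as the paper's: fix a perfect pairwise e.d.\ set, run a Bernstein-type recursion over its perfect subsets, putting one branch into the family and permanently excluding another branch by adding a ``blocker'' that agrees with it infinitely often, and finally obtain a maximal e.d.\ family. However, there is a genuine gap, and it sits exactly at the point you flagged as the technical heart: the ``unique agreement'' lemma is false. Knowing an infinite set of agreement coordinates of $g$ with $f_x$ does determine $x$, but different $x$'s can agree with $g$ on different (pairwise almost disjoint) sets of coordinates. Concretely, choose nodes $(\tau_s)_{s\in 2^{<\w}}$ in $2^{<\w}$ of pairwise distinct lengths with $\tau_{s^\frown 0},\tau_{s^\frown 1}\supseteq\tau_s$ incomparable, put $\sigma_{|\tau_s|}=\tau_s$ (and anything of the right length at unused levels), and let $g(n)=c(\sigma_n)$. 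Then for every $y\in 2^\w$ the branch $x_y=\bigcup_k\tau_{y\restricted k}$ satisfies $x_y\restricted|\tau_{y\restricted k}|=\sigma_{|\tau_{y\restricted k}|}$ for all $k$, so $g$ agrees infinitely often with $f_{x_y}$. Hence a single $g$ blocks a perfect set of branches of $P_0$, and your estimate (``fewer than $\c$ branches of $Q$ are spoiled by $<\c$ functions'') fails: one function in $H$ can spoil \emph{all} branches of some perfect $Q\subseteq P_0$.

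This breaks the proof as written because you interleave the maximality step (iii) with the Bernstein steps (i)--(ii): a function $h_\beta$ added for maximality at stage $\beta$ can be exactly such a diagonal $g$ --- e.d.\ from the $<\c$ members present at stage $\beta$, yet blocking every branch of some perfect $Q\subseteq P_0$ that only appears as $Q_\alpha$ for some $\alpha>\beta$; at that stage step (i) is impossible and the guarantee that $\ca$ meets $Q_\alpha$ is lost. The paper avoids this entirely by reordering: it first completes the whole length-$\c$ recursion, producing for every perfect subset of the a.d.\ perfect set a member $a_\alpha$ of the family inside it and an excluded point $d_\alpha$ protected by a blocker $x_\alpha$, and only afterwards extends the resulting family to a m.a.d.\ family in a single step (Zorn's lemma). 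The extension can only add sets, so it preserves ``meets every perfect subset of $[T]$'', and the blockers preserve ``omits a point of every perfect subset of $[T]$''; no estimate on how much arbitrary functions can block is ever needed. Your argument becomes correct, and essentially identical to the paper's, if you run (i) and (ii) for all $\alpha<\c$ first and postpone maximality to a final application of Zorn's lemma.
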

\begin{proof} Fix $T\subseteq \omega^{}<\omega$ a perfect tree such that $[T]$ is a.d. in $\omega^\omega$. Let us enumerate $Perf(T)=\{ T_\alpha: \alpha<\c\}$ a family of all perfect subsets of $T$. 
By transfinite reccursion let us define
$$
\{ (a_\alpha,d_\alpha,x_\alpha)\in [T]^2\times \omega^\omega:\alpha<\c\}
$$
such that for any $\alpha<\c$ we have:
\begin{enumerate}
 \item $a_\alpha,d_\alpha\in T_\alpha$,
 \item $\{ a_\xi:\xi<\alpha\}\cap \{ d_\xi:\xi<\alpha\}=\emptyset$,
 \item $\{ a_\xi:\xi<\alpha\}\cup \{ x_\xi:\xi<\alpha\}$ is a.d.,
 \item $\forall^\infty n\; x_\alpha(n)=d_\alpha(n)$ but $x_\alpha\ne d_\alpha$.
\end{enumerate}
Now assume that we are in $\alpha$-th step construction and we have required sequence
$$
\{ (a_\xi,d_\xi,x_\xi)\in [T]^2\times \omega^\omega:\xi<\alpha\}
$$
which have size at most $\omega|\alpha|<\c$ then we can choose in $[T_\alpha]$ (of size $\c$) $a_\alpha,d_\alpha\in [T_\alpha]$ which fulfills the first condition. 
Then choose any $x_\alpha\in \omega^\omega$ different than $d_\alpha$ but $(\forall^\infty n) d_\alpha(n)=x_\alpha(n)$ then $x\in\omega^\omega\setminus [T]$ and 
$$
\{ a_\xi:\xi<\alpha\}\cup \{ x_\xi:\xi<\alpha\}
$$
forms an a.d. family in $\omega^\omega$. Then $\alpha$-th step construction is completed. By transfinite induction theorem we have required sequence of the length $\c$. 
Now set $A_0 = \{ a_\alpha:\alpha < \c\}\cup \{ x_\alpha:\alpha<\c\}$ and let us extend it to any maximal a.d. family $A$. It is easy to chect that $A$ is required $s$-nonmeasurable m.a.d. family in the Baire space $\omega^\omega$.
\end{proof}

The next theorem generalizes result obtained in \cite{R1}.
\begin{theorem}\label{d_l_mad} There exists a \mad family of functions 
$\ca\subseteq \omega^\omega$ such that $\ca$ is not $s,l,m$-measurable at the same time, and there is an dominating subfamily $\ca'\in [\ca]^{\le \d}$ in Baire space $\omega^\omega$.
\end{theorem}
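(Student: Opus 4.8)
The plan is to upgrade the construction of Theorem \ref{nonmeasurable_mad} by replacing its single a.d. perfect tree with a triple of host trees, one of each type, whose branches together form a single e.d. family. Concretely, I would first prove a lemma producing a perfect tree $T^s$, a Laver tree $T^l\in\Laver$ and a Miller tree $T^m\in\Miller$ such that $[T^s]\cup[T^l]\cup[T^m]$ is an e.d. family and, in addition, $[T^l]$ contains a branch eventually dominating any prescribed $g\in\omega^\omega$. The device I would use is self-coding: fix a pairing $\langle\cdot,\cdot\rangle\colon\omega^{<\omega}\times\omega\to\omega$ and build each tree so that above its stem every branch $x$ has $x(n)=\langle x\restricted n,j\rangle$ for some $j$. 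Decoding the first coordinate, $x(n)$ determines $x\restricted n$, so whenever two branches agree at a level $n$ above the stem they already agree below $n$; hence distinct branches agree only on an initial segment and $[T^\bullet]$ is e.d. Letting $j$ range over all of $\omega$ at every node yields the infinite splitting of a Laver tree; letting it range only along a cofinal set of nodes yields a Miller tree; restricting to $j\in\{0,1\}$ at splitting nodes yields a perfect tree. Tagging the three codings with disjoint value ranges (say by a residue mod $3$) makes the three branch sets mutually e.d., and choosing $j$ large at each node of $T^l$ realizes domination of any single $g$.

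With the hosts fixed, I would run the recursion of Theorem \ref{nonmeasurable_mad} simultaneously over all three. Enumerate $\{U_\alpha:\alpha<\c\}$ consisting of all perfect subtrees of $T^s$ together with all Laver subtrees of $T^l$ and all Miller subtrees of $T^m$ (there are exactly $\c$ of each). At stage $\alpha$, having used fewer than $\c$ reals, choose distinct fresh branches $a_\alpha,d_\alpha\in[U_\alpha]$ and a blocker $x_\alpha\in\omega^\omega$ with $x_\alpha=^*d_\alpha$, $x_\alpha\ne d_\alpha$ and $x_\alpha$ lying off all three hosts, keeping the $a$'s disjoint from the $d$'s. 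The essential simplification over the abstract e.d.\ extension problem is that $a_\alpha,d_\alpha$ lie in the fixed e.d.\ scaffold, so they are automatically e.d.\ from every earlier $a_\xi,d_\xi$; and since $x_\xi=^*d_\xi$ with $d_\xi$ in the scaffold, the blockers are e.d.\ from everything as well. Fixing a dominating family $\{g_\eta:\eta<\d\}$, I would also select branches $b_\eta\in[T^l]$ with $b_\eta\ge^*g_\eta$, kept distinct from the $d_\xi$. Put $A_0=\{a_\alpha:\alpha<\c\}\cup\{x_\alpha:\alpha<\c\}\cup\{b_\eta:\eta<\d\}$; by the previous remarks $A_0$ is e.d., and I extend it to a maximal e.d.\ (equivalently m.a.d.) family $\ca$.

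Finally I would verify the four requirements. For each host, every subtree $Q$ of the relevant type equals some $U_\alpha$, so $[Q]$ meets $\ca$ in $a_\alpha$, while $d_\alpha\in[Q]$ cannot enter $\ca$ because $x_\alpha\in\ca$ agrees with it infinitely often; hence no subtree of $T^s$ (resp.\ $T^l$, $T^m$) is contained in $\ca$ or disjoint from it, giving $s$-, $l$- and $m$-nonmeasurability at once. The family $\ca'=\{b_\eta:\eta<\d\}\subseteq\ca$ dominates $\{g_\eta\}$, hence is dominating of size $\le\d$, and $\ca$ is maximal by construction. The main obstacle is the host lemma: one must build all three trees so that they simultaneously carry the correct splitting type, pairwise eventually different branches, and (for the Laver host) cofinal growth, and must check that the finite modifications producing the blockers $x_\alpha$ land outside the scaffold while preserving eventual difference — all of which the self-coding scheme is designed to make routine.
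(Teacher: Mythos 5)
Your proposal is correct and is essentially the paper's own argument: mutually eventually different host trees of the three types kept apart by disjoint residue classes (the paper uses residues mod $4$, you use mod $3$ with a self-coding device in place of the paper's explicit level-by-level a.d.-tree construction), then a length-$\c$ recursion over all perfect/Laver/Miller subtrees of the hosts choosing a witness $a_\alpha\in[U_\alpha]$ to be put into the family, a branch $d_\alpha\in[U_\alpha]$, and a blocker $x_\alpha=^*d_\alpha$, $x_\alpha\neq d_\alpha$, which keeps $d_\alpha$ out of every e.d. extension, followed by extension to a maximal e.d. family. The only substantive difference is the dominating part --- the paper embeds a dominating family of size $\d$ into a separate fourth a.d. tree with values in $4\bbn$, whereas you draw dominating branches $b_\eta$ from the Laver host itself; this also works, but you should interleave the choice of the $b_\eta$'s with the recursion (rather than selecting them before or after it) so that the required distinctness between the $b_\eta$'s and the $d_\xi$'s remains achievable in the case $\d=\c$.
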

\begin{proof} Now by assumption there is a dominating family $\cd_0\subseteq \omega^\omega$ of size $\d$. Then we show the existence an a.d. dominating family $\cd$ of the same size. 
To do let $\cp = \{ A_m\in [\omega]^\omega:\; m\in\omega \}$ be a partition of $\omega$ ont infinite subsets. Now let us construct a tree as follows: 
$T_{-1}=\{ \emptyset \}$, next $T_0=\{ (0,n): n\in\omega\}$. 
Now assume that we have defined $T_n$ for a fixed $n\in\omega$ and let us enumerate $T_n=\{ s_k: k\in\omega\}$ then for any $m\in\omega$ let us set $A_m={ k_i\in\omega: k\in\omega}$ as an increasing sequence and define 
$T_{n+1,m} = \{ s_m\cup \{(n+1, k_i)\}:i\in\omega\}$ and then let $T_{n+1}=\bigcup_{m\in\omega} T_{n+1,m}$ and finally $T=\bigcup_{n\in\omega\cup\{ -1\}} T_n$.  
It is easy to observe that $[T]$ forms a a.d. family of reals in $\omega^\omega$. 

Now let us define an embedding $F:\cd_0\to [T]$ as follows: pick an arbitrary element $d\in\cd_0$ which is an union $\bigcup\{ d\upharpoonright n: n\in\omega\}$ 
then assign to $d\restricted 0=\emptyset\in T_{-1}$ and to $d\restricted 1$ $t_0=d\restricted 1=\{ (0,d(0))\}$. Now let us assume that we have assigned for a fixed 
$d\restricted n$  $t_n\in T_n$ for $n\in\omega$. Then there is unique $m\in\omega$ such that $t_n\in T_{n,m}$ but $A_m=\{ k_i:i\in\omega\}$ is represented by the increasing sequence 
$(k_i)_{i\in\omega}\in\omega^\omega$ then $d\restricted n+1$ is assigned to $t_{n+1} = t_n\cup \{ (n+1,w)\}$ where $w=k_{d(n+1)}$ which is a greater than $d(n+1)$ of course. 
From the construction we see that $t_{n+1}\in T_{n+1}$ and for any $n\in\omega$ $t_n\subseteq t_{n+1}$. Now let $f(d)=\bigcup\{ t_n\in T_n: n\in\omega:\}\in [T]$. 
It easy to see that this construction ensure that $f$ is one to one mapping and for any $d\in \cd_0$ $d\le f(d)$. Now let $\cd=\{ 4\cdot f(d): d\in\cd_0\}\subseteq (4\bbn)^\omega$ 
which forms a dominating family in $\omega^\omega$ of size equal to $\d=|\cd_0|$.

Now let us choose a.d. trees $S\subseteq (4\bbn+1)^{<\omega}$, $M\subseteq (4\bbn+2)^{<\omega}$ and $L\subseteq (4\bbn+3)^{<\omega}$ where $S$ is a perfect tree, $M$ is Miller 
and last $L$ is a Laver tree.

Let us enumerate $Perf(S)=\{ T_\alpha: \alpha<\c\}$ a family of all perfect subsets of $S$ and analogously $Miller(M)=\{ M_\alpha:\alpha<\c\}$, $Laver(L)=\{ L_\alpha:\alpha<\c\}$. 
By transfinite reccursion let us define
$$
\{ w_\alpha \in [S]^2\times \omega^\omega\times [M]^2\times \omega^\omega\times [L]^2\times \omega^\omega:\alpha<\c\}
$$
where $w_\alpha=(a^s_\xi,d^s_\xi,x^s_\xi,a^s_\xi,d^s_\xi,x^s_\xi,a^s_\xi,d^s_\xi,x^s_\xi,)$ for any $\alpha<\c$, and such that for any $\alpha<\c$ we have:
\begin{enumerate}
 \item $a^s_\alpha,d^s_\alpha\in S_\alpha$,
 \item $\{ a^s_\xi:\xi<\alpha\}\cap \{ d^s_\xi:\xi<\alpha\}=\emptyset$,
 \item $\{ a^s_\xi:\xi<\alpha\}\cup \{ x^s_\xi:\xi<\alpha\}$ is a.d.,
 \item $\forall^\infty n\; x^s_\alpha(n)=d^s_\alpha(n)$ but $x^s_\alpha\ne d^s_\alpha$.
 
 \item $a^m_\alpha,d^m_\alpha\in M_\alpha$,
 \item $\{ a^m_\xi:\xi<\alpha\}\cap \{ d^m_\xi:\xi<\alpha\}=\emptyset$,
 \item $\{ a^m_\xi:\xi<\alpha\}\cup \{ x^m_\xi:\xi<\alpha\}$ is a.d.,
 \item $\forall^\infty n\; x^m_\alpha(n)=d^m_\alpha(n)$ but $x^m_\alpha\ne d^m_\alpha$.

 \item $a^l_\alpha,d^l_\alpha\in L_\alpha$,
 \item $\{ a^l_\xi:\xi<\alpha\}\cap \{ d^l_\xi:\xi<\alpha\}=\emptyset$,
 \item $\{ a^l_\xi:\xi<\alpha\}\cup \{ x^l_\xi:\xi<\alpha\}$ is a.d.,
 \item $\forall^\infty n\; x^l_\alpha(n)=d^l_\alpha(n)$ but $x^l_\alpha\ne d^l_\alpha$.
\end{enumerate}
Now assume that we are in $\alpha$-th step construction and we have required sequence
$$
\{ w_\alpha :\; \xi<\alpha\}
$$
which have size at most $\omega\cdot |\alpha|<\c$. In case of perfect part we can choose in $[S_\alpha]$ (of size $\c$) $a^s_\alpha,d^s_\alpha\in [S_\alpha]$ which fulfills the first condition. 
Then choose any $x^s_\alpha\in \omega^\omega$ different than $d^s_\alpha$ but $(\forall^\infty n) d_\alpha(n)=x_\alpha(n)$ then $x^s_\alpha\in\omega^\omega\setminus [S]$ and 
$$
\{ a_\xi:\xi \le \alpha\}\cup \{ x_\xi:\xi \le \alpha\}
$$
forms an a.d. family in $\omega^\omega$. In the same way we can choose rest points of our tuple for Miller and Laver trees. Then $\alpha$-th step construction is completed. 
By transfinite induction theorem we have required sequence of the length $\c$. Now set 
$$
A_s = \cd\cup \{ a^s_\alpha:\alpha < \c\}\cup \{ x^s_\alpha:\alpha<\c\},
$$
$$
A_m = \cd\cup \{ a^m_\alpha:\alpha < \c\}\cup \{ x^m_\alpha:\alpha<\c\}
$$
and
$$
A_l = \cd\cup \{ a^l_\alpha:\alpha < \c\}\cup \{ x^l_\alpha:\alpha<\c\}
$$
and let us extend the family $\ca=\cd\cup\ca_s\cup\ca_m\cup\ca_l$ to any maximal a.d. family $A$. It is easy to check that $A$ is required $s,m$ 
and $l$-nonmeasurable m.a.d. family in the Baire space $\omega^\omega$ with a dominating subfamily of size $\cd$, what completes this proof.
\end{proof}

Now, let us give some examples of subsets of Baire space which are nonmeasurable in one sense and measurable in other one at the same time. Similar results were obtained in \cite{Brendle}.
\begin{theorem} There are subsets $A,B,C$ of the Baire space such that
\begin{itemize}
 \item $A$ is $l$-measurable and not $s$-measurable,
 \item $B$ is $m$-measurable  but not $s$-measurable,
 \item $C$ is $l$-measurable but not $m$-measurable.
\end{itemize}
Moreover, if $\b=\c$ then 
\begin{itemize}
 \item there is a not $l$-measurable set which is $s$-measurable
 \item there is a not $m$-measurable set which is $s$-measurable.
\end{itemize}
\end{theorem}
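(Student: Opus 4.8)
The plan is to dispatch the three unconditional statements by one common template and the two conditional statements by a single transfinite recursion whose feasibility is where $\b=\c$ enters.

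\emph{The three ZFC sets.} For each bullet I would fix a ``host tree'' $H$ enjoying two properties: first, $[H]$ lies in the ideal of the measurability I want to \emph{keep}, so that every subset of $[H]$ is automatically measurable in that sense; second, $H$ carries a rich supply of subtrees of the type whose measurability I want to \emph{destroy}, so that a Bernstein construction inside $[H]$ is available. For $A$ and $B$ I take $H=2^{<\omega}$. The key observation is that every subset of $2^\omega$ lies in $l_0\cap m_0$: given $T\in\Laver$ (resp. $T\in\Miller$) and an infinitely splitting node $u$ above its stem, restricting the successors of $u$ to values $\ge 2$ produces a Laver (resp. Miller) subtree $Q\subseteq T$ with $[Q]\cap 2^\omega=\emptyset$. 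Hence any $A\subseteq 2^\omega$ is both $l$- and $m$-measurable, and taking $A$ to be a Bernstein subset of $2^\omega$ makes it non-$s$-measurable, witnessed by the perfect tree $2^{<\omega}$ itself; the same set serves for both the first and the second bullet.

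\emph{The set $C$.} Here the host must separate $\Laver$ from $\Miller$. I would use $M^*=\{t\in\omega^{<\omega}:\ t(2k+1)=0\text{ whenever }2k+1<|t|\}$, whose odd coordinates are frozen to $0$ while even coordinates split fully. Infinitely splitting nodes (the even-length ones) are dense, so $M^*\in\Miller$; but every node of odd length is non-splitting, so no subtree of $M^*$ can split infinitely everywhere above a stem, i.e. $M^*$ has \emph{no} Laver subtree. Moreover $[M^*]\in l_0$: given $L\in\Laver$ with stem $s$ and an odd $p\ge|s|$, deleting the successor $0$ at every node of $L$ of length $p$ leaves a Laver subtree $Q\subseteq L$ all of whose branches have $x(p)\ne 0$, whence $[Q]\cap[M^*]=\emptyset$. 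Thus any $C\subseteq[M^*]$ is $l$-measurable, and choosing $C\subseteq[M^*]$ Bernstein with respect to the Miller subtrees of $M^*$ (enumerate them as $\{M_\alpha:\alpha<\c\}$ and at stage $\alpha$ put one point of $[M_\alpha]$ into $C$ and keep another out) makes $C$ non-$m$-measurable, witnessed by $M^*$.

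\emph{The two conditional sets.} Assume $\b=\c$. For the first I construct $D\in s_0$ (hence $s$-measurable) that is Bernstein with respect to the Laver subtrees of $\omega^{<\omega}$, and symmetrically for Miller in the second. Enumerate the perfect trees as $\{P_\alpha:\alpha<\c\}$ and the relevant Laver (resp. Miller) subtrees as $\{Q_\alpha:\alpha<\c\}$, and build $D=\{a_\alpha:\alpha<\c\}$ by recursion. At stage $\alpha$: (i) thin $P_\alpha$ to a finitely branching, hence \emph{compact}, perfect set $[R_\alpha]\subseteq[P_\alpha]$ avoiding $\{a_\xi:\xi<\alpha\}$, and declare $[R_\alpha]$ to be outside $D$; (ii) pick $b_\alpha\in[Q_\alpha]\setminus\{a_\xi:\xi<\alpha\}$ to keep out of $D$; (iii) pick $a_\alpha\in[Q_\alpha]\setminus\big(\bigcup_{\xi\le\alpha}[R_\xi]\cup\{b_\xi:\xi\le\alpha\}\big)$ and put it in $D$. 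The sets $[R_\alpha]$ then witness $D\in s_0$, while for the full tree every $Q_\alpha$ meets both $D$ (through $a_\alpha$) and its complement (through $b_\alpha$), so $D$ is non-$l$-measurable (resp. non-$m$-measurable).

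\emph{Main obstacle.} The crux is step (iii): I must locate $a_\alpha\in[Q_\alpha]$ outside the accumulated reserved region. Each $[R_\xi]$ is compact and $|\alpha|<\c=\b=\add(K_\sigma)$, where $K_\sigma$ is the $\sigma$-ideal of $\sigma$-compact subsets of $\omega^\omega$; hence $\bigcup_{\xi\le\alpha}[R_\xi]$ is itself $\sigma$-compact. Since every Laver and every Miller tree is superperfect, $[Q_\alpha]$ is nowhere $\sigma$-compact and in particular is not covered by any $\sigma$-compact set, and deleting the further $<\c$ points $\{b_\xi\}$ still leaves a choice for $a_\alpha$. This is the one place where $\b=\c$ is genuinely used; everything else in the argument is outright $\ZFC$.
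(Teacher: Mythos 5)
Your proposal is correct and follows essentially the same route as the paper's proof: host trees whose branch sets lie in the ideal you want to keep (your $2^\omega\in l_0\cap m_0$, and your $M^*$ with frozen odd coordinates mirroring exactly the paper's non-splitting-odd-levels Miller subtree $M$), Bernstein-type recursions inside those hosts, and for the conditional part a length-$\c$ recursion that reserves compact (finitely branching) perfect sets to witness $s_0$ while $\b=\c$ lets each new point of the Laver/Miller-Bernstein set escape the fewer than $\c$ reserved compacta together with the marked-out points. The differences are only cosmetic simplifications: you work with explicit hosts in the full Baire space and take the nonmeasurable set itself, whereas the paper builds its hosts inside the a.d. tree of Theorem \ref{d_l_mad} and uses sets of the form $([T]\setminus[S])\cup X$; and your appeal to $\add(K_\sigma)=\b$ is the same mechanism as the paper's dominating functions $f_\xi(n)=\max\{s(n):\ s\in P_\xi\land n\in \mathrm{dom}(s)\}$.
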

\begin{proof} To show the first part, let $T$ be $a.d.$-disjoint Laver tree (defined as in the proof of Theorem \ref{d_l_mad}). Now let us define a perfect subtree $S$ of $T$ such that all levels of $S$ consist the first two numbers of $T$, i.e. $\tau\in S$ if $\tau\in T$ and for any $i\in dom(\tau)$ $\tau(i)\in\{m,n\}$ where $m=\min\{ s(i):s\in T\}$ and $n=\min\{ s(i): s\in T\land s(i)\ne m\}$. Now choose any $X\subset [S]$ which is not $s$-measurable and then let $A=([T]\setminus [S])\cup X$. Now let us observe that $[S]\in l_0$ and then $X\in l_0$ what give the assertion.
                                                                                                                                                                                                                                                                                                                                                                                                                                                                                                   
The similar argument shews the second clause.

To see next, let $T$ be as above and let $M\subseteq T$ be a Miller tree defined as follows: on the odd levels of $T$ each node of $M$ does not split but in evan levles of $T$ the all nodes of $M$ uses half part of the level of $T$ i.e. if $s\in M$ and $i\in dom(s)$ is even then $s(i)\in \{a_{2k}:k\in\omega\}$ where $\{ a_k:k\in\omega\}$ enumeration of 
$$
\{ m\in\omega: (\exists \tau\in T)\; \tau=s\restricted i\land \tau^\frown m\in T\}.
$$
Firstly we show that $[M]\in l_0$, let us consider any Laver subtree $S\subseteq T$ with a stem $s\in S$ then let us find a node $\tau\in S$ which extend $s$ and $|\tau|$ is odd but every node $\tau'\in M$ with $|\tau'|$ is odd has no splitting one then we can find infinite set of splitting i.e. 
$$
W_\tau=\{ n\in\omega:\; \tau^\frown \in S\setminus M\}
$$
and then we can a Laver subtree $S'$ of $S$ such that $[S']\cap [M]$ is empty and then we have conclusion. Now let us consider any not $m$-measurable subset $Y\subset [M]$ then $B=([T]\setminus [M])\cup Y$ is as we want.

To prove the next sentence let us enumerate all Laver subtrees of $T$ $T_\alpha:\alpha<\c\}$ and all perfect subtrees $\{ S_\alpha<\c]|$ of $T$. Then let us define a transfinite sequence:
$$
(a_\xi,d_\xi,P_\xi):\xi<\c
$$
with the following conditions, for any $\xi<\c$
\begin{enumerate}
 \item $a_\xi,d_\xi\in [T_\xi]$
 \item for any $\alpha<\c$ $\{ a_\eta:\eta<\xi\}\cap \{ d_\eta:\eta<\xi\}$,
 \item $P_\xi\subseteq S_\xi$ and $P_\xi$ is binary tree,
 \item for any $\eta<\xi$ $P_\eta\cap \{ a_\beta:\beta<\xi\}=\emptyset$.
\end{enumerate}
Now in $\alpha$-th step construction let us consider $T_\alpha\subseteq T$ and $S_\alpha$. Then let us consider a perfect set $P$ contained in $[S_\alpha]$ generated by a binary tree $P_\alpha$ such that $P\cap \{ a_\xi:\xi<\alpha\}=\emptyset$ which is possible because any perfect can be partitioned onto $\c$ many perfect sets. Then choose $a_\alpha\in [T_\alpha]\setminus (\bigcup_{\xi<\alpha} P_\xi\cup \{ d_\xi:\xi<\alpha\})$ what is guaranteed by the fact that $\b=\c$ and $a_\alpha$ is in unbounded in family $\{ f_\xi:\xi<\alpha\}$ where
$$
f_\xi(n)= \max\{ s(n):s\in P_\xi\land n\in dom(s)\}
$$
The chossing $d_\alpha\in [T_\alpha]\setminus \{ a_\xi:\xi\le\alpha\}$ finishes this step of construction.

Finally the $\{ a_\xi:\xi<\c\}$ witness the required set.

The proof of the last sentence is similar to previous one when we replace the family of binary $\{ P_\xi<\c\}$ trees by Miller trees such that every odd node has finite splitting.
\end{proof}

\section{Nonmeasurable and completely nonmeasurable unions}
It is known that for any $\sigma$-ideal
$\ci$ with a Borel base  if  $\ca$ is point-finite and its union is not in $\ci$ then  there exists a subfamily $\ca'$ of $\ca$ such that the union of its sets is not in the $\sigma$-algebra generated by the Borel sets and $\ci$ (see \cite{Buk}, \cite{BCGR}).

It is known that within ZFC it is not possible to replace the as-
sumption that the family $\ca$ is point-finite even by the one saying that
$\ca$ is point-countable (see \cite{Fre}).

In various cases it is possible to obtain more than nonmeasurability
of the union of a subfamily of $\ca$. Namely, the intersection of this union
with any measurable set that is not in $\ci$
is nonmeasurable (recall, the measurability is understood here in the sense of belonging to the $\sigma$-algebra generated by the family of Borel sets and $\ci$). Such strong
conclusion can be obtained for the ideal of first Baire category sets
under the assumption that $\ca$ is a partition, but without assuming anything about the regularity of the elements of $\ca$ (see \cite{5P}).

Some related topics are presented in \cite[chapter 14]{Kha}. Namely, the problem of the existance of the family of first category sets whose union do not posses Baire property is discussed for general topological spaces of second Baire category.

In \cite{FT}, the problem concerning null sets is discussed. It is shown that for every partition of unit inerval into Lebesgue null sets and for every $\varepsilon>0$ we can find a subfamily such that its union has inner measure smaller than $\varepsilon$ and outher measure grater than $1-\varepsilon.$  

In paper \cite{Z} it was shown how to obtain complete nonmeasurability of
the union of a subfamily of $\ca$ assuming that $\ca$ is point-finite family.
However, the result requires some  set-theoretic assumptions. Namely, we need to assume that
there is no quasi-measurable cardinal smaller than $2^\omega.$ (Recall that
$\kappa$ is quasi-measurable if there exists a $\kappa$-additive ideal
$\ci$ of subsets of $\kappa$ such that the Boolean algebra $P(\kappa)/\ci$ satisfies countable chain condition.) By the Ulam theorem (see \cite{Jech}) every quasi-measurable
cardinal is weakly inaccessible, so it is a large cardinal. 

The above result was strenghtened in paper \cite{RZ1} where it was shown that it is enough to assume that  there is no quasi-measurable cardinal not greater than $2^\omega.$

The problem concerning finding completely $\ci$-nonmeasurable sets were also discussed in papers \cite{R}, \cite{RZ2}. In those results the starting families fulfills some additional conditions.   

The aim of this section is to  discus the following problem. Let $\ci$ be a $\sigma$-ideal of subsets of $\bbr$.
Assume that  $\cp\subseteq\ci$. Is it possible that
for all $\ca\subseteq\cp$
$$
\bigcup\ca \text{ is } \ci\text{-nonmeasurable }
$$
$$
\Downarrow
$$
$$
\bigcup\ca \text{ is completely} \ci\text{-nonmeasurable}?
$$
We will consider situations when $\cp$ is a partition of $\bbr$, $\cp$ is point-finite family and $\cp$ is point-countable family. 

Throught this section $\ci$ will denote a $\sigma$-ideal of subsets of $\bbr$ satisfying the following conditions
\begin{enumerate}
 \item  $\ci $ contain singletons, i.e. $[\bbr]^\w\subseteq\ci,$
 \item $\ci$ has Borel base, i.e. $(\forall I \in\ci)(\exists B \in \Borel \cap \ci)(I\subseteq B),$
 \item $\ci$ is {\bf translation invariant}, i.e. 
 $$(\forall I \in\ci)(\forall x \in\bbr)(x + I = \{x + i :\ i\in I \} \in\ci).$$
\end{enumerate}

\begin{definition}
 Let $A\subseteq\bbr$. We say that 
 \begin{enumerate}
  \item $A$ is {\bf $\ci$-nonmeasurable} if $A$ does not belong to the $\sigma$-algebra generated by Borel sets and $\sigma$-ideal $\ci;$
  \item $A$ is {\bf completely $\ci$-nonmeasurable} if $A\cap B$ is $\ci$-nonmeasurable for every Borel set $B$ which does not belong to $\ci.$
 \end{enumerate}
\end{definition}

Let us remark that the folowing conditions are all equivalent:
\begin{enumerate}
 \item $A$ is completely $\ci$-nonmeasurable,
 \item $A\cap B$ and $A\cap (\bbr\setminus B)$ does not belong to $\ci$  for every Borel set $B$ such that $B, \bbr\setminus B\notin\ci,$
 \item $A$ intersects every Borel set which does not belong to $\ci$ and does not contain any of such sets.
\end{enumerate}
 
Let us notice that if $\ci$ is the ideal of countable sets then $A$ is completely $\ci$-nonmeasurable if and only if $A$ is a Bernstein set. That is why completely $\ci$-nonmeasurable sets are sometimes called $\ci$-Bernstein sets.

If $\ci$ is the ideal of Lebesgue null sets then $A$ is completely $\ci$-nonmeasurable if and only if its inner measure is zero and the inner measure of its complement is also zero.

We divide results into three groups: the first - ideals with Steinhaus property, the second - families consisting of finite sets and the third - families consisting of countable sets.
\subsection{Ideals with weaker Smital property}
In this subsection we will consider $\sigma$-ideals possesing weaker Smital property. This notion was introduced in \cite{lodz} and was invastigated in \cite{MZ}.
Let us recall the definition.
\begin{definition}
We say that $\ci $ has {\bf weaker Smital property} if there exists a countable dense set $D$ such that
$$
 (\forall A\in\Borel\setminus\ci)((A+D)^c\in\ci).
$$
We say that $D$ witnesses that $\ci$ has the weaker Smital property.
\end{definition}

Let us notice that the weaker Smital property is implied by Smital property and Smital property is implied by Steinhaus property.

Let us remark that the ideal $\cn$ of null subsets of $\bbr$ and the ideal $\cm$ of meager subsets of $\bbr$ have Steinhaus property.
More natural examples of ideals possesing weaker Smital property in euclidean spaces can be found in \cite{lodz}.

From the other hand, the ideal $[\bbr]^{\le\w}$ of countable subsets of reals does not have weaker Smital property.

\begin{theorem}
Assume $\ci$ has weaker Smital property. Then there exists a partition
$\cp\subseteq\ci$ of $\bbr$ such that for every $\ca\subseteq\cp$
$$
\bigcup\ca \text{ is } \ci\text{-nonmeasurable }
$$ 
$$
\Downarrow
$$
$$
\bigcup\ca \text{ is completely } \ci\text{-nonmeasurable.}
$$
\end{theorem}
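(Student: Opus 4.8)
The plan is to construct the partition $\cp$ directly from the witness $D$ for the weaker Smital property, so that the Smital property transfers nonmeasurability of a union into complete nonmeasurability automatically. First I would fix a countable dense set $D=\{d_n:n\in\w\}$ witnessing the weaker Smital property, and I would build the partition out of a single ``seed'' set together with its $D$-translates. Concretely, the aim is to find a set $E\subseteq\bbr$ with $E\in\ci$ such that the $D$-orbit $E+D=\bigcup_{d\in D}(E+d)$ already covers $\bbr$ up to an $\ci$-set, and then refine $\{E+d:d\in D\}$ into an honest partition $\cp$ by disjointifying the translates (replacing $E+d_n$ by $(E+d_n)\setminus\bigcup_{k<n}(E+d_k)$, then adding the leftover $\ci$-set to one cell). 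Since $\ci$ is translation invariant and has a Borel base and contains singletons, each cell stays in $\ci$, and $\cp$ is a genuine partition of $\bbr$.

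The key idea for the implication is the following. Suppose $\ca\subseteq\cp$ with $\bigcup\ca$ being $\ci$-nonmeasurable; I want to show $\bigcup\ca$ is completely $\ci$-nonmeasurable, i.e.\ it meets every Borel non-$\ci$ set in a non-$\ci$ set and contains no such set. The mechanism is that $\bigcup\ca$ is ``spread out'' along the translates by $D$, so I would arrange the construction so that $\bigcup\ca$ and its complement are both unions of cells, each of which is a piece of a $D$-translate of the seed $E$. Given any Borel set $B\notin\ci$, the weaker Smital property gives $(\bigcup\ca + D)^c$ membership control: the point is that if $\bigcup\ca$ contained a full set $B\setminus(\ci\text{-set})$ for some Borel $B\notin\ci$, or missed one entirely, then translating by $D$ would force either $\bigcup\ca$ or its complement to be co-$\ci$, contradicting $\ci$-nonmeasurability. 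I would make this rigorous by showing that for a union of cells $U=\bigcup\ca$, nonmeasurability of $U$ is equivalent (given the Smital-based structure) to $U$ being completely nonmeasurable, because both $U$ and $U^c$ inherit from the construction the property that their intersection with any Borel non-$\ci$ set, once saturated under $D$, covers a co-$\ci$ set.

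The main obstacle, and the step I would spend the most care on, is guaranteeing the existence of the seed $E\in\ci$ whose $D$-orbit is co-$\ci$ while simultaneously ensuring that every subfamily union $\bigcup\ca$ is forced into complete nonmeasurability rather than merely nonmeasurability. The weaker Smital property as stated only controls $(A+D)^c$ for Borel $A\notin\ci$, not for arbitrary unions of cells, so I would need to reduce the general case to the Borel case: given $\bigcup\ca$ nonmeasurable and a Borel $B\notin\ci$, I would argue by contradiction that if $(\bigcup\ca)\cap B\in\ci$ (or dually contains a Borel non-$\ci$ set), then a Borel non-$\ci$ set $A$ can be extracted and the weaker Smital property applied to $A$ yields that the whole orbit $A+D$ is co-$\ci$; combined with the fact that $\cp$ consists of $D$-translates, this propagates $\ci$-smallness (or co-$\ci$-largeness) of $\bigcup\ca$ across all translates and collapses its nonmeasurability. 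Verifying that the disjointification does not destroy this translate structure — and that adding the leftover $\ci$-remainder to a single cell keeps the Smital argument intact — is the delicate bookkeeping I expect to occupy the bulk of the proof.
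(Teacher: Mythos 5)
Your overall mechanism in the second and third paragraphs --- ``if $\bigcup\ca$ contained or missed a Borel set $B\notin\ci$, then translating by $D$ and applying the weaker Smital property forces $\bigcup\ca$ or its complement to be co-$\ci$, contradicting nonmeasurability'' --- is exactly the right one, but the partition you propose to support it cannot exist, and this is a fatal gap. You ask for a seed $E\in\ci$ whose orbit $E+D$ covers $\bbr$ up to an $\ci$-set. Since $D$ is countable, $\ci$ is translation invariant, and $\ci$ is a $\sigma$-ideal, we have $E+D=\bigcup_{d\in D}(E+d)\in\ci$; hence $(E+D)^c\notin\ci$ (the ideal is proper), so the ``leftover'' is never an $\ci$-set --- it is co-$\ci$. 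Equivalently: your $\cp$ would be a partition of $\bbr$ into countably many cells (one per $d\in D$, after disjointification), and $\bbr$ can never be partitioned into countably many members of a proper $\sigma$-ideal. So either the leftover cell is not in $\ci$ (violating $\cp\subseteq\ci$) or $\cp$ does not cover $\bbr$. Moreover, even setting this aside, disjointified translates of a single seed do not make unions of cells $D$-invariant (a single cell $E+d_0$ is not invariant), and $D$-invariance of $\bigcup\ca$ is precisely what your contradiction argument needs when you write $B\subseteq\bigcup\ca\Rightarrow B+D\subseteq\bigcup\ca$.

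The repair is to get the $D$-invariance not from translates of one small seed but from cosets: assume without loss of generality that $D$ is a countable dense subgroup of $(\bbr,+)$ (replace $D$ by the group it generates; the weaker Smital property is preserved since the orbit only gets larger), and let $\cp=\{x+D:\ x\in\bbr\}$ be the family of $D$-cosets. This is a partition of $\bbr$ into continuum many countable sets, so $\cp\subseteq\ci$ because $\ci$ contains singletons and is a $\sigma$-ideal; no covering-up-to-$\ci$ issue arises at all. Now every union of cells $U=\bigcup\ca$ satisfies $U+D=U$ exactly, and your intended argument closes immediately: if $U$ is $\ci$-nonmeasurable but not completely $\ci$-nonmeasurable, there is a Borel $B\notin\ci$ with either $B\subseteq U$ or $B\cap U=\emptyset$; in the first case $(B+D)^c\in\ci$ and $B+D\subseteq U+D=U$ give $U^c\in\ci$, in the second case the same applied to the ($D$-invariant) complement gives $U\in\ci$, and either way $U$ is $\ci$-measurable, a contradiction. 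This is the paper's proof; your error is confined to the construction of $\cp$, not to the use you planned to make of it.
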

\begin{proof}
Let $D$ be a set witnessing that $\ci $ has weaker Smital propoerty. We can assume that $D$ is a subgroup of $(\mathbb{R},+)$
For $x, y \in\bbr$ let $x \sim y \leftrightarrow x-y\in D.$ Set
$$
\cp = \bbr/\sim = \{x_\alpha + D :\ \alpha\in 2^\w \}.
$$
Take $\ca\subseteq\cp$ such that $\bigcup\ca$ is  $\ci$-nonmeasurable.
Assume that $\bigcup\ca$ is not completely $\ci$-nonmeasurable.
Then $\bigcup\ca\notin\ci$ and $\bigcup(\cp\setminus\ca)\notin\ci$ and at least one of this sets
contains $\ci$-possitive Borel set. Without loss of generality we can assume that $\bigcup\ca$ contains $\ci$-possitive Borel set.
By weaker Smital property of $\ci$, the set $(\bigcup\ca +D)^c$ belongs to $\ci$. Notice that 
$$
\bigcup\ca +D =\bigcup\ca\text{ and }\bigcup\ca\cap\bigcup(\cp\setminus\ca)=\emptyset
$$
Contradiction.
\end{proof}

\subsection{Finite sets}
In this subsection we will deal with families consisting of finite sets.
\begin{theorem}
Let $\cp \subseteq [\bbr]^{<\w}$ be a partition of $\bbr.$ Then
\begin{enumerate}
 \item there is $\ca_0\subseteq \cp$ such that $\bigcup\ca_0$ is completely $\ci$-nonmeasurable;
 \item there is $\ca_1\subseteq \cp$ such that $\bigcup\ca_1$ is $\ci$-nonmeasurable but is not
completely $\ci$-nonmeasurable.
\end{enumerate}
\end{theorem}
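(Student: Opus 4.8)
The plan is to prove both parts by transfinite recursions of length $\c$ of Bernstein type, exploiting that each member of $\cp$ is finite and that $\ci$ contains all singletons and has a Borel base.

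First I record the working criterion for complete nonmeasurability. Since $\ci$ contains singletons (hence all countable sets) and has a Borel base, for every $\ci$-positive Borel set $H$ and every $A\subseteq\bbr$: if $A\cap H\in\ci$ then $A\cap H$ is covered by a Borel set $N\in\ci$, and $H\setminus N$ is a $\ci$-positive Borel set disjoint from $A$. Contrapositively, if $A$ meets every $\ci$-positive Borel set then $A\cap H\notin\ci$ for all such $H$. Hence, by the equivalences recalled after the definition, $A$ is completely $\ci$-nonmeasurable iff both $A$ and $\bbr\setminus A$ meet every $\ci$-positive Borel set. I also record the counting fact used repeatedly: every $\ci$-positive Borel set is uncountable, hence of size $\c$, and since the members of $\cp$ are finite it is met by exactly $\c$ of them.

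For part (1) I fix an enumeration $\{B_\alpha:\alpha<\c\}$ of all $\ci$-positive Borel sets and build $\ca_0$ by deciding, at stage $\alpha$, the membership of two hitherto-undecided pieces $p_\alpha,q_\alpha$ that both meet $B_\alpha$, declaring $p_\alpha\in\ca_0$ and $q_\alpha\notin\ca_0$. At stage $\alpha$ fewer than $\c$ pieces have been decided, while $\c$ pieces meet $B_\alpha$, so $p_\alpha,q_\alpha$ exist; pieces never decided are thrown in arbitrarily. Then $\bigcup\ca_0$ meets every $B_\alpha$ (through $p_\alpha$) and so does its complement $\bigcup(\cp\setminus\ca_0)=\bbr\setminus\bigcup\ca_0$ (through $q_\alpha$), so by the criterion $\bigcup\ca_0$ is completely $\ci$-nonmeasurable. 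As $\cp$ is a partition this is a clean $\ZFC$ construction, needing neither translation invariance nor any extra set-theoretic hypothesis.

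For part (2) the idea is to destroy complete nonmeasurability on one $\ci$-positive Borel set while forcing genuine nonmeasurability on another. Fix an $\ci$-positive Borel set $B$ with $\ci$-positive complement (by translation invariance some interval of a fixed length is $\ci$-positive, hence so are all of them, and $\bbr\setminus B$ contains another one). Put every piece meeting $B$ into $\ca_1$; then $\bigcup\ca_1\supseteq B$, so $\bigcup\ca_1$ contains a $\ci$-positive Borel set and is therefore not completely $\ci$-nonmeasurable. It remains to choose the pieces disjoint from $B$ so that $\bigcup\ca_1\cap(\bbr\setminus B)$ is completely nonmeasurable within $\bbr\setminus B$, which by the criterion relativised to the Borel set $\bbr\setminus B$ forces $\bigcup\ca_1\notin\Borel[\ci]$. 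Since every piece meeting $B$ lies in $\ca_1$, the ``meet'' half is automatic; for the ``co-meet'' half I must, for each $\ci$-positive Borel $H\subseteq\bbr\setminus B$, leave out of $\ca_1$ a piece meeting $H$ and disjoint from $B$, afresh at each of the $\c$ stages.

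The main obstacle is exactly this last point. The pieces meeting $B$ (all forced into $\ca_1$) drop their parts outside $B$ into $\bbr\setminus B$, and this straddle junk $J=\bigcup\{p\setminus B:p\in\cp,\ p\cap B\ne\emptyset\}$ could a priori cover a positive Borel subset of $\bbr\setminus B$, leaving no $B$-avoiding piece free to be excluded there. I therefore need a separation lemma: an $\ci$-positive, co-positive Borel $B$ whose saturation $\bigcup\{p:p\cap B\ne\emptyset\}$ meets every $\ci$-positive Borel subset of $\bbr\setminus B$ in a set of size less than $\c$, so that for each $\ci$-positive Borel $H\subseteq\bbr\setminus B$ there remain $\c$ many $B$-avoiding pieces meeting $H$. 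The resource one would exploit to produce such $B$ is the finiteness of the pieces together with translation invariance: split $\bbr$ into countably many $\ci$-positive Borel blocks (translates of a short positive interval), note that each finite piece links only finitely many blocks, and try to choose $B$ as a union of blocks along which the linking junk stays thin. Granting the separation lemma, the co-meet recursion runs as in part (1) relativised to $\bbr\setminus B$, and $\bigcup\ca_1$ is $\ci$-nonmeasurable but, containing $B$, not completely $\ci$-nonmeasurable. Verifying the separation lemma for arbitrary, possibly highly non-measurable partitions is the step I expect to be delicate, and it is there that the hypotheses on $\ci$ — Borel base, translation invariance, and the finiteness of the members of $\cp$ — must all be brought to bear.
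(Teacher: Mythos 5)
Your part (1) is correct and coincides with what the paper calls the standard Bernstein-type construction, so there is nothing to discuss there. The genuine gap is in part (2), and it is exactly the one you flag yourself: the ``separation lemma'' is never proved, and in fact it is false for general partitions into finite sets, so the route you describe cannot be completed. To see this, enumerate as $\{B_\alpha:\ \alpha<\c\}$ all $\ci$-positive Borel sets with $\ci$-positive complement and construct, by a recursion of length $\c$, pairwise disjoint pairs so that every $B_\alpha$ ends up with $\c$ many pairs $\{x,y\}$ with $x\in B_\alpha$ and $y\notin B_\alpha$: at stage $\alpha$ add one new such pair for each $\beta\le\alpha$, which is possible because fewer than $\c$ points have been used so far while $|B_\beta|=|\bbr\setminus B_\beta|=\c$ (positive Borel sets are uncountable, hence of size $\c$). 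Let all remaining points form singletons. For the resulting partition $\cp$, every $\ci$-positive, co-positive Borel set $B$ is some $B_\alpha$, and taking $H=\bbr\setminus B_\alpha$ (a positive Borel subset of $\bbr\setminus B_\alpha$) we get $|H\cap\bigcup\{p\in\cp:\ p\cap B_\alpha\neq\emptyset\}|=\c$. So \emph{no} choice of $B$ satisfies your lemma, no matter how the ``blocks'' are chosen; and the weaker property your recursion actually needs (for every positive Borel $H\subseteq\bbr\setminus B$ there remain $\c$ many $B$-avoiding pieces meeting $H$) is likewise not established by anything in your sketch --- translation invariance and finiteness of the pieces do not yield it.

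The paper proves (2) by a different and much simpler device, plus a citation. Order each piece increasingly, $Y_\alpha=\{y_0^\alpha<y_1^\alpha<\dots<y_n^\alpha\}$, and set $X_k=\{y_k^\alpha:\ \alpha\}$; since $\bbr=\bigcup_k X_k$, some $X_k$ is $\ci$-positive (the paper takes it w.l.o.g.\ to be $X_0$, the set of minima). Choose $r$ with $X_0\cap(-\infty,r)\notin\ci$ and $X_0\cap(r,+\infty)\notin\ci$, and let $\cp^+=\{Y_\alpha:\ y_0^\alpha>r\}$. The crucial point is that membership in $\cp^+$ is decided by the \emph{minimum} of the piece, so every piece of $\cp^+$ lies entirely in $(r,+\infty)$: straddling is impossible by definition, not by a lemma. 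Moreover $\bigcup\cp^+\supseteq X_0\cap(r,+\infty)\notin\ci$. Now the paper invokes the theorem quoted in its introduction (Bukovsk\'y; Brzuchowski--Cicho\'n--Grzegorek--Ryll-Nardzewski): any point-finite family of sets from $\ci$ whose union is not in $\ci$ has a subfamily $\ca_1$ with $\ci$-nonmeasurable union. Then $\bigcup\ca_1\subseteq(r,+\infty)$ is disjoint from the $\ci$-positive Borel set $(-\infty,r)$, hence not completely $\ci$-nonmeasurable. Note that this citation is doing real work: producing a plainly (rather than completely) nonmeasurable union of a prescribed family is not something the Bernstein counting method delivers, which is precisely why your attempt to manufacture it through relative complete nonmeasurability on $\bbr\setminus B$ collapses onto the false separation lemma. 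To repair your write-up, replace the $B$-saturation apparatus by the minima trick and either cite the Bukovsk\'y--BCGR theorem, as the paper does, or prove it.
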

\begin{proof}
Family $\ca_0$ can be constructed in the standard way following construction of Bernstein set.

To prove the second part let us enumerate 
$$
\cp = \{Y_\alpha : \alpha\in2^\w \},
$$
$$
Y_\alpha = \{y_0^\alpha , y_1^\alpha , \ldots , y_n^\alpha \},\quad y_0^\alpha<y_1^\alpha< \ldots  <y_n^\alpha.
$$

Define $X_k = \{y_k^\alpha : \alpha\in 2^\w \}.$ Witout lost of generality $X_0\notin\ci.$
We can find $r\in\bbr$ such that $X_0 \cap (-\infty, r ) \notin\ci$ and
$X_0 \cap (r , +\infty) \notin\ci.$

Set $\cp^+ = \{Y_\alpha :\ y_0^\alpha > r \} \subseteq\cp.$
We have that $\bigcup\cp^+ \subseteq (r , +\infty)$ and $\bigcup\cp\notin\ci.$
Find $\ca_1 \subseteq\cp^+$ such that $\bigcup\ca_1$ is $\ci$-nonmeasurable. 
$\bigcup\ca_1$ is not
completely $\ci$-nonmeasurable.
\end{proof}

\subsection{Countable sets}
In this subsection we will deal with families consisting of countable sets.
\begin{theorem} 
Assume that $\cp\subseteq [\bbr]^{\le\w}$ is a point-countable cover of $\bbr.$ Then we can find
$\ca\subseteq\cp$ such that $\bigcup\ca$ is completely $[\bbr]^{\le\w}$-nonmeasurable.
\end{theorem}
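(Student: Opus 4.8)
The plan is to reduce the statement to the construction of a Bernstein set and then to run a transfinite recursion of length $\c$ in which the point-countability of $\cp$ is precisely what keeps the construction alive. By the remark following the definition of complete $\ci$-nonmeasurability, for $\ci=[\bbr]^{\le\w}$ a set is completely $\ci$-nonmeasurable exactly when it is a Bernstein set, i.e.\ it meets every perfect set and omits a point of every perfect set. So it suffices to produce $\ca\subseteq\cp$ whose union $A=\bigcup\ca$ satisfies $A\cap P\ne\emptyset$ and $P\setminus A\ne\emptyset$ for every $P\in\Perf(\bbr)$.

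First I would fix an enumeration $\Perf(\bbr)=\{P_\alpha:\alpha<\c\}$, using that there are exactly $\c$ perfect sets. The basic counting observation, which I would record as the resource driving the recursion, is this: since $\cp$ covers $\bbr$ and every member of $\cp$ is countable, each $P_\alpha$ (which has size $\c$) is covered by those members of $\cp$ that meet it; because $\cofin(\c)>\w$, this forces at least $\c$ members of $\cp$ to meet $P_\alpha$.

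Next I would build by recursion on $\alpha<\c$ a member $a_\alpha\in\cp$ (the family $\ca=\{a_\alpha:\alpha<\c\}$) together with a reserved point $q_\alpha$, maintaining for all $\alpha$ the clauses \textbf{(a)} $a_\alpha\cap P_\alpha\ne\emptyset$ and $a_\alpha\cap\{q_\eta:\eta<\alpha\}=\emptyset$, and \textbf{(b)} $q_\alpha\in P_\alpha$ and $q_\alpha\notin\bigcup\{a_\xi:\xi\le\alpha\}$. For (a): among the $\c$ many members of $\cp$ meeting $P_\alpha$, the only forbidden ones are those containing some already-reserved $q_\eta$; by point-countability each $q_\eta$ lies in at most countably many members of $\cp$, so the forbidden family has size at most $|\alpha|\cdot\w<\c$, leaving $\c$ legal choices for $a_\alpha$. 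For (b): the set $\bigcup\{a_\xi:\xi\le\alpha\}$ is a union of fewer than $\c$ countable sets, hence of size $<\c$, so the perfect set $P_\alpha$ still contains an uncovered point, which I take as $q_\alpha$.

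Finally I would set $A=\bigcup\ca$ and verify the two Bernstein conditions. Clause (a) gives $a_\alpha\cap P_\alpha\ne\emptyset$, hence $A\cap P_\alpha\ne\emptyset$ for every $\alpha$. For the omission I would check that $q_\alpha\notin a_\beta$ for every $\beta$: when $\beta\le\alpha$ this is clause (b), and when $\beta>\alpha$ it is the avoidance half of clause (a) applied at stage $\beta$ (since $q_\alpha\in\{q_\eta:\eta<\beta\}$). Thus $q_\alpha\in P_\alpha\setminus A$, so $A$ meets and co-meets every perfect set and is therefore completely $[\bbr]^{\le\w}$-nonmeasurable. I expect the main obstacle to be clause (a): securing at every stage a member of $\cp$ that still hits $P_\alpha$ while avoiding the ever-growing set of reserved points. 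This is exactly where point-countability is indispensable — it caps at $\w$ the number of members destroyed by each reserved point, so strictly fewer than $\c$ members are ever lost — and it is the special feature that, for the countable ideal, rescues the point-countable case which the earlier remark shows to fail for general $\sigma$-ideals with Borel base.
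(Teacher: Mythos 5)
Your proof is correct and follows essentially the same approach as the paper: the identical transfinite recursion over an enumeration of perfect sets, choosing at each stage a member of $\cp$ meeting $P_\alpha$ while avoiding the reserved points (this is where point-countability enters, exactly as in the paper) and then reserving a point of $P_\alpha$ outside the union built so far. The only cosmetic difference is that you justify the recursion step by counting the members of $\cp$ that meet $P_\alpha$ (where your appeal to $\cofin(\c)>\w$ is unnecessary, since a union of $\kappa<\c$ countable sets already has size $\kappa\cdot\w<\c$), whereas the paper first picks an unpoisoned point of $Q_\alpha$ and then uses the covering property to find a member of $\cp$ through it.
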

\begin{proof}
We will slightly modify the standard construction of Bernstein set. Let 
$$
\{Q_\alpha:\ \alpha<2^\w\}
$$
be enumeration of all nonempty perfect subsets of $\bbr$. By transfinite induction on $\alpha<2^\w$ we will construct 
$$
A_\alpha\in\cp,\quad x_\alpha\in Q_\alpha
$$
satisfying the following conditions
\begin{enumerate}
 \item $A_\alpha\cap Q_\alpha\neq\emptyset,$
 \item $A_\alpha\cap\{x_\beta: \beta<\alpha\}=\emptyset,$
 \item $x_\alpha\notin\bigcup_{\beta\le\alpha}A_\beta.$
\end{enumerate}
The construction can be made because at $\alpha$-step
$$
Q_\alpha\setminus\left(\bigcup\{A\in\cp:\ \exists\beta<\alpha\ x_\beta\in A\}\cup\bigcup_{\beta<\alpha}A_\beta\right)\neq\emptyset.
$$
So, we can find $A_\alpha$ and $x_\alpha$ fulfilling our requirements.

At the end, we get $\ca=\{A_\alpha:\ \alpha<2^\w\}$ such that $\bigcup\ca\cap Q_\alpha\neq\emptyset$ and $\bigcup\ca\cap\{x_\alpha:\ \alpha< 2^\w\}=\emptyset$, what shows that $\bigcup\ca$ is completely $[\bbr]^{\le\w}$-nonmeasurable. 
\end{proof}

\begin{theorem}[$\neg CH$]\label{nch}
Assume that $\cp\subseteq [\bbr]^{\le\w}$ is a partition of $\bbr.$ Then we can find
$\ca\subseteq\cp$ such that $\bigcup\ca$ is $[\bbr]^{\le\w}$-nonmeasurable but is not completely
$[\bbr]^{\le\w}$-nonmeasurable.
\end{theorem}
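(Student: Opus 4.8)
The plan is to specialize the two measurability notions to the concrete ideal $\ci=[\bbr]^{\le\w}$ and then exploit that $\neg CH$ places $\omega_1$ strictly between $\w$ and $\c$. First I would observe that every countable subset of $\bbr$ is an $F_\sigma$ set, hence Borel; thus $[\bbr]^{\le\w}\subseteq\Borel$ and the $\sigma$-algebra $\Borel[[\bbr]^{\le\w}]$ generated by the Borel sets together with the ideal is simply $\Borel$. Consequently $\bigcup\ca$ is $[\bbr]^{\le\w}$-nonmeasurable exactly when it fails to be Borel, while (as already remarked after the definition of complete nonmeasurability) $\bigcup\ca$ is completely $[\bbr]^{\le\w}$-nonmeasurable exactly when it is a Bernstein set. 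So the goal reduces to producing $\ca\subseteq\cp$ whose union is non-Borel but not Bernstein.

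Next I would extract the family. Since $\cp$ partitions $\bbr$ into countable blocks and $|\bbr|=\c$, necessarily $|\cp|=\c$, and under $\neg CH$ we have $\w<\omega_1<\c$, so $|\cp|>\omega_1$ and I may choose $\omega_1$ pairwise distinct blocks $\ca=\{Y_\alpha:\alpha<\omega_1\}$. Because the blocks of a partition are nonempty, pairwise disjoint and countable, the union $U:=\bigcup\ca$ has cardinality exactly $\omega_1$: at least $\omega_1$ by disjointness, and at most $\omega_1\cdot\w=\omega_1$. The pleasant point is that no special choice of blocks is needed --- any $\omega_1$ of them will do.

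I would then verify the two required properties purely from the cardinality $|U|=\omega_1$. For nonmeasurability: $U$ is uncountable, yet having size $\omega_1<\c$ it contains no perfect set (every perfect set has size $\c$); since every uncountable Borel set contains a perfect set (the perfect set property), $U$ is not Borel, i.e. $U$ is $[\bbr]^{\le\w}$-nonmeasurable. For the failure of complete nonmeasurability: a Bernstein set meets each member of a family of $\c$ pairwise disjoint perfect sets and hence has cardinality $\ge\c$, so $U$ (of size $\omega_1<\c$) cannot be Bernstein. This is the one step deserving a line of argument: since $U$ itself contains no perfect set, the only way it can fail to be Bernstein is that its complement $U^c=\bigcup(\cp\setminus\ca)$ contains a perfect set; hence $U$ is not completely $[\bbr]^{\le\w}$-nonmeasurable, completing the proof.

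The argument is short, so there is no serious computational obstacle; the real content is the two-sided use of $\neg CH$. The hypothesis is used exactly once, to guarantee $\omega_1<\c$, which simultaneously makes $U$ large enough to be non-Borel (uncountable with no perfect subset) and small enough to avoid being Bernstein (size below $\c$, forcing a perfect set into the complement). Under $CH$ this band collapses and the construction breaks down, which is consistent with the theorem being stated under $\neg CH$.
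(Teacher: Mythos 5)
Your proof is correct and follows essentially the same route as the paper: choose $\omega_1$ blocks of the partition (possible since $|\cp|=\c$), note the union has size $\omega_1<\c$ under $\neg CH$, and deduce non-Borelness from the perfect set property together with failure of complete nonmeasurability via a family of $\c$ pairwise disjoint perfect sets. You merely make explicit two facts the paper leaves implicit (that $\Borel[[\bbr]^{\le\w}]=\Borel$ and that complete $[\bbr]^{\le\w}$-nonmeasurability is equivalent to being a Bernstein set, the latter being a remark already stated in the paper), so the substance is identical.
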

\begin{proof}
Take $\ca\subseteq\cp$ such that $|\ca| = \w_1.$
$|\bigcup\ca| = \w_1 < 2^\w.$ So, $\bigcup\ca$ is $[\bbr]^{\le\w}$-nonmeasurable.
Fix $\{Q_\alpha : \alpha\in 2^\w \}$ a family of pairwise disjoint perfect sets.
There exists $\alpha$ such that $Q_\alpha \cap \bigcup\ca = \emptyset.$ 
So, $\bigcup\ca$ is not completely
$[\bbr]^{\le\w}$-nonmeasurable.
\end{proof}

\begin{theorem}[$CH$]\label{ch}
There is $\cp\subseteq [\bbr]^{\le\w}$ a partition of $\bbr$ such that for any $\ca\subseteq\cp$
$$ \bigcup\ca \text{ is } [\bbr]^{\le\w} \text{-nonmeasurable }
$$
$$
\Downarrow
$$
$$
\bigcup\ca \text{ is completely } [\bbr]^{\le\w} \text{-nonmeasurable. }$$
\end{theorem}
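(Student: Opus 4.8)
The plan is to isolate one combinatorial property of the partition that makes the desired implication automatic, and then to produce such a partition by a transfinite recursion of length $\w_1$ powered by CH. First I would record two reductions. Since every countable subset of $\bbr$ is Borel, for the ideal $[\bbr]^{\le\w}$ the $\sigma$-algebra generated by Borel sets and the ideal is just the Borel $\sigma$-algebra; thus ``$[\bbr]^{\le\w}$-measurable'' means ``Borel''. Moreover, as remarked just before the statement, ``completely $[\bbr]^{\le\w}$-nonmeasurable'' coincides with ``Bernstein''. Hence it suffices to produce a partition $\cp=\{P_\xi:\xi<\w_1\}$ of $\bbr$ into countable sets satisfying the property $(\ast)$: every perfect set is disjoint from at most countably many blocks of $\cp$.

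Granting $(\ast)$, I would deduce the theorem by contraposition. Assume $\bigcup\ca$ is not Bernstein, so some perfect set $Q$ satisfies $Q\cap\bigcup\ca=\emptyset$ or $Q\subseteq\bigcup\ca$. In the first case every block belonging to $\ca$ is disjoint from $Q$, so by $(\ast)$ the family $\ca$ is countable and $\bigcup\ca\in[\bbr]^{\le\w}$ is Borel. In the second case $Q$ is disjoint from $\bigcup(\cp\setminus\ca)$ (as $\cp$ is a partition), so by $(\ast)$ the family $\cp\setminus\ca$ is countable and $\bigcup\ca$ is co-countable, hence again Borel. Either way $\bigcup\ca$ is measurable, which is exactly the contrapositive of the required implication.

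The core of the argument is therefore the construction of $\cp$. Using CH in the form $2^\w=\c=\w_1$, I would fix enumerations $\{Q_\alpha:\alpha<\w_1\}$ of all perfect subsets of $\bbr$ and $\{r_\alpha:\alpha<\w_1\}$ of all reals, and build pairwise disjoint countable sets $P_\xi$ by recursion on $\xi<\w_1$ so that $P_\xi$ meets $Q_\alpha$ for every $\alpha\le\xi$ and $r_\xi\in\bigcup_{\eta\le\xi}P_\eta$. At stage $\xi$ the already chosen part $\bigcup_{\eta<\xi}P_\eta$ is a countable union of countable sets, hence countable; since each of the countably many sets $Q_\alpha$ with $\alpha\le\xi$ is uncountable, I can pick distinct points $q_\alpha\in Q_\alpha$ lying outside $\bigcup_{\eta<\xi}P_\eta$, and then place $r_\xi$ into $P_\xi$ if it has not yet been placed. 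Disjointness is preserved by the choice of the $q_\alpha$, and the bookkeeping on the $r_\xi$ guarantees $\bigcup_{\xi<\w_1}P_\xi=\bbr$, so $\cp=\{P_\xi:\xi<\w_1\}$ is a partition of $\bbr$ into countable sets. Finally, if a block $P_\xi$ is disjoint from $Q_\alpha$ then necessarily $\xi<\alpha$, so $Q_\alpha$ is missed by at most $|\alpha|\le\w$ blocks; this is $(\ast)$.

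I expect the only delicate point to be carrying the three requirements---each block meeting all earlier perfect sets, mutual disjointness, and covering every real---through a recursion whose proper initial segments remain countable. This is exactly where CH is indispensable: it forces both the list of perfect sets and the list of reals to have order type $\w_1$ with countable proper initial segments, so that at every stage only countably many points have been committed and only countably many perfect sets must be met. Were $2^\w>\w_1$, the initial segments of any enumeration of the perfect sets would be uncountable and $(\ast)$ would break down. By contrast, the deduction of the theorem from $(\ast)$ is routine, so the whole difficulty is concentrated in setting up this recursion correctly.
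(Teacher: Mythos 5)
Your proposal is correct and takes essentially the same route as the paper: under CH you enumerate the perfect sets in order type $\w_1$ and build a partition of $\bbr$ into countable blocks so that the $\xi$-th block meets every perfect set of index $\le\xi$, which is exactly the paper's construction (the paper's blocks meet all strictly earlier $Q_\beta$). Your property $(\ast)$ and the contrapositive deduction are just an explicit packaging of the dichotomy the paper leaves implicit (if $\ca$ or $\cp\setminus\ca$ is countable the union is measurable, otherwise it is Bernstein), with the added care of bookkeeping the reals $r_\xi$ to ensure $\cp$ actually covers $\bbr$.
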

\begin{proof}
Let $\{Q_\alpha : \alpha\in\w_1\}$ be an enumeration of all perfect subsets of $\bbr.$
We can construct a partition $\cp = \{X_\alpha : \alpha\in\w_1 \}\subseteq [\bbr]^{\le\w}$ 
in such a way that
$X_\alpha \cap Q_\beta \neq \emptyset$ for every $\beta <\alpha.$
Now, take $\ca \subseteq\cp$ such that $|\ca| = |\cp\setminus\ca | = \w_1.$ Then
$\bigcup\ca \cap Q_\alpha \neq\emptyset$ and
$\bigcup(\cp \setminus \ca) \cap Q_\alpha \neq \emptyset$ for every $\alpha < \w_1.$
So, $\bigcup\ca$ is completely $[\bbr]^{\le\w}$-nonmeasurable.
\end{proof}

As a consequence of Theorem \ref{nch} and Theorem \ref{ch} we get the following characterisation of Continuum Hypothesis.
\begin{cor}
The following statements are equivalent:
\begin{enumerate}
 \item CH,
 \item there is $\cp\subseteq [\bbr]^{\le\w}$ a partition of $\bbr$ such that for any $\ca\subseteq\cp$
$$
\bigcup\ca \text{ is } [\bbr]^{\le\w}\text{-nonmeasurable }
$$
$$
\Updownarrow
$$
$$
\bigcup\ca \text{ is completely } [\bbr]^{\le\w}\text{-nonmeasurable.}
$$
\end{enumerate}
\end{cor}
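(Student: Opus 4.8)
The plan is to derive the equivalence directly from Theorem~\ref{nch} and Theorem~\ref{ch}, after one preliminary observation that simplifies the biconditional in clause~(2). First I would note that complete $[\bbr]^{\le\w}$-nonmeasurability always entails $[\bbr]^{\le\w}$-nonmeasurability: if $A\cap B\notin[\bbr]^{\le\w}$ for every Borel set $B\notin[\bbr]^{\le\w}$, then taking $B=\bbr$ (which is not countable) yields that $A$ itself fails to belong to the $\sigma$-algebra generated by the Borel sets and $[\bbr]^{\le\w}$. Consequently, for any fixed $\ca$ the upward half of the biconditional $\Updownarrow$ is automatic, and the content of clause~(2) collapses to the single downward implication: for every $\ca\subseteq\cp$, if $\bigcup\ca$ is $[\bbr]^{\le\w}$-nonmeasurable then it is completely $[\bbr]^{\le\w}$-nonmeasurable. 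In other words, clause~(2) is literally the assertion produced by Theorem~\ref{ch}.

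For the direction $(1)\Rightarrow(2)$ there is then nothing left to do: under CH, Theorem~\ref{ch} constructs a partition $\cp\subseteq[\bbr]^{\le\w}$ of $\bbr$ for which every subfamily with $[\bbr]^{\le\w}$-nonmeasurable union has in fact completely $[\bbr]^{\le\w}$-nonmeasurable union. By the reduction of the previous paragraph, this $\cp$ is exactly the witness required by clause~(2).

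For the converse $(2)\Rightarrow(1)$ I would argue by contraposition using Theorem~\ref{nch}. Assume $\neg CH$, and let $\cp\subseteq[\bbr]^{\le\w}$ be an arbitrary partition of $\bbr$. By Theorem~\ref{nch} there exists $\ca\subseteq\cp$ such that $\bigcup\ca$ is $[\bbr]^{\le\w}$-nonmeasurable but is not completely $[\bbr]^{\le\w}$-nonmeasurable. For this $\ca$ the left side of the biconditional holds while the right side fails, so the equivalence in clause~(2) is violated. Since $\cp$ was arbitrary, no partition can witness clause~(2) under $\neg CH$; equivalently, clause~(2) forces CH. Combining the two directions yields the stated equivalence.

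The argument is essentially bookkeeping, so I do not anticipate a genuine obstacle. The only point that requires care is the logical reduction in the first paragraph: recognizing that the biconditional $\Updownarrow$ collapses onto its one nontrivial implication, and that Theorems~\ref{ch} and \ref{nch} then supply precisely the two halves of the equivalence once that collapse has been made.
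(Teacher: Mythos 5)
Your proposal is correct and follows exactly the route the paper intends: the corollary is stated there as an immediate consequence of Theorem~\ref{nch} and Theorem~\ref{ch}, which is precisely how you argue, with the additional (sound) bookkeeping observation that the upward half of the biconditional is automatic since $\bbr$ itself is a Borel set outside $[\bbr]^{\le\w}$. No gaps.
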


%

\end{document}